\newtheorem{proposition}{Proposition}
\newproof{proof}{Proof}
\newdefinition{definition}{Definition}
\newdefinition{example}{Example}
\journal{Journal of Geometry and Physics}
\begin{document}

\begin{frontmatter}

%% Title, authors and addresses

%% use the tnoteref command within \title for footnotes;
%% use the tnotetext command for theassociated footnote;
%% use the fnref command within \author or \address for footnotes;
%% use the fntext command for theassociated footnote;
%% use the corref command within \author for corresponding author footnotes;
%% use the cortext command for theassociated footnote;
%% use the ead command for the email address,
%% and the form \ead[url] for the home page:
%% \title{Title\tnoteref{label1}}
%% \tnotetext[label1]{}
%% \author{Name\corref{cor1}\fnref{label2}}
%% \ead{email address}
%% \ead[url]{home page}
%% \fntext[label2]{}
%% \cortext[cor1]{}
%% \address{Address\fnref{label3}}
%% \fntext[label3]{}

\title{Deformed Hamiltonian vector fields and Lagrangian fibrations}

%% use optional labels to link authors explicitly to addresses:
%% \author[label1,label2]{}
%% \address[label1]{}
%% \address[label2]{}

\author{David S. Tourigny}
\ead{dst27@cam.ac.uk}
\address{Department of Applied Mathematics \& Theoretical Physics \\ University of Cambridge \\ Wilberforce Road \\ Cambridge CB3 0WA, United Kingdom}

\begin{abstract}
%% Text of abstract
Certain dissipative physical systems closely resemble Hamiltonian systems in $\mathbb{R}^{2n}$, but with the canonical equation for one of the variables in each conjugate pair rescaled by a real parameter. To generalise these dynamical systems to symplectic manifolds in this paper we introduce and study the properties of deformed Hamiltonian vector fields on Lagrangian fibrations. We describe why these objects have some interesting applications to symplectic geometry and discuss how their physical interpretation motivates new problems in mathematics.
\end{abstract}

\begin{keyword}
%% keywords here, in the form: keyword \sep keyword
Symplectic geometry \sep Dynamical systems \sep Dissipative systems
%% PACS codes here, in the form: \PACS code \sep code

%% MSC codes here, in the form: \MSC code \sep code
%% or \MSC[2008] code \sep code (2000 is the default)

\end{keyword}

\end{frontmatter}

%% \linenumbers

%% main text
\section{Introduction}

Symplectic geometry arises as the natural generalisation of conservative Hamiltonian mechanics to differentiable manifolds. The phase space of a Hamiltonian system is generalised to a symplectic manifold and phase portraits are interpreted as integral curves of a Hamiltonian vector field. Symplectic geometry therefore has its origins in classical physics, but more recent times have seen string theory play a role in the discovery of Gromov-Witten invariants and the birth of Floer theory. Together with mirror symmetry, these developments are some of the great success stories of symplectic geometry that can be partially attributed to mathematical physics. Certain dissipative dynamical systems arising in physics are also described using a symplectic viewpoint although not in the setting of differentiable manifolds \cite{Duf62,Tou15}. The current paper grew out of an attempt to put these dynamical systems into the context of symplectic geometry.  

Hamiltonian vector fields, which generalise dynamical systems appearing in classical mechanics, play a central role in several different versions of Floer theory for symplectic manifolds and Lagrangian submanifolds. In particular, the original motivation for Floer's work was to find a proof for Arnold's conjecture that the number of periodic solutions of a Hamiltonian system on a symplectic manifold is bounded below by the sum of its Betti numbers. Hamiltonian vector fields also generate a group of exact symplectomorphisms that determine the geometry of a symplectic manifold. From a different viewpoint, these mathematical abstractions provide a geometric interpretation for many physical arguments, such as preservation of the phase space distribution function in Liouville's theorem or conservation of energy along the integral curves of a Hamiltonian vector field. In light of this it is quite remarkable that Hamiltonian vector fields have such a clear physical interpretation whilst at the same time motivating (and being used as tools to solve) so many mathematical problems arising in symplectic geometry. Then again, perhaps this is not so surprising given that symplectic geometry was developed to accommodate Hamiltonian systems into a geometric setting. Can something similar be achieved for the dynamical systems considered in \cite{Duf62,Tou15}? This is the problem that we attempt to address here.          

In $\mathbb{R}^{2n}$ our dynamical systems closely resemble Hamilton's, but with the equation for one of the variables in each conjugate pair of coordinates rescaled by a nonzero factor of $q \in \mathbb{R}$  
\begin{equation} \label{start}
\dot{x}_i = q^{-1} \frac{\partial H }{\partial y_i } , \quad \dot{y}_i = - \frac{\partial H }{\partial x_i } .
\end{equation} 
Generally the smooth function $H: \mathbb{R}^{2n} \to \mathbb{R}$ might also depend on $q$. Equations of this form were considered a long time ago in physics and used to model dissipative phenomena \cite{Duf62}. There is also a more recent interpretation of these systems in theoretical biology \cite{Tou15}. It is clear that (\ref{start}) becomes an ordinary Hamiltonian system in the limit $q \to 1$ and so to generalise such dynamical systems to symplectic manifolds we introduce the notion of a deformed Hamiltonian vector field. After proving some basic properties of deformed Hamiltonian vector fields we discuss how these objects are related to certain topics in symplectic geometry.  

\section{Preliminaries}

The purpose of this section is two-fold. Firstly, to provide some physical motivation for deformed Hamiltonian vector fields. Secondly, to introduce the geometric objects that we will be studying throughout this paper, before fixing notation and conventions for the proceeding sections. The exposition in subsection 2.2 will be at a level suitable for those familiar with basic differential geometry and algebraic topology, and requires no previous exposure to symplectic geometry, mirror symmetry or Floer theory.

\subsection{Deformed classical mechanics}    
In \cite{Duf62}, Duffin studied systems of the form (\ref{start}) as a proposed model for certain dissipative physical phenomena. He called the dynamics {\em psuedo-Hamiltonian}. The same equations have also been shown to include models of very simple biological processes \cite{Tou15}. Properties of these systems are clearly different from those of conservative Hamiltonian systems since, reflecting dissipation, the Hamiltonian $H$ is not an integral of motion
\begin{equation} \label{divH}
\frac{dH}{dt} = (q^{-1} - 1)\sum_{i=1}^n \frac{\partial H}{\partial x_i}\frac{\partial H}{\partial y_i} . 
\end{equation}          
Duffin introduced a deformed Poisson bracket
\begin{equation}
\sum_{i=1}^n q^{-1} \frac{\partial H}{\partial x_i}\frac{\partial F}{\partial y_i} - \frac{\partial F}{\partial x_i}\frac{\partial H}{\partial y_i}
\end{equation}
to mirror the formalism of conventional classical mechanics. He showed that this bracket remains invariant under specific representations of canonical transformations, and in \cite{Tou15} these were discussed in the setting of q-deformed groups. Duffin also proved a dissipative version of Liouville's theorem for statistical ensembles on phase space under the assumption that second derivatives $\partial^2 H/\partial x_i \partial y_i$ are constant. One purpose for studying deformed Hamiltonian vector fields will be to generalise some of these results to a coordinate-free framework.

As $q$ varies it traces out a one-parameter family of equations (\ref{start}). We can ask how the typical dynamics vary as a function of $q$, and as a first step in doing so consider (\ref{divH}) under the constraint that $H$ is locally positive-definite with $\sum_{i=1}^n \frac{\partial H}{\partial x_i}\frac{\partial H}{\partial y_i} >0$ for all $t>0$. We then have three possible regimes for the parameter $q$. If $q<0$ or $q>1$ then the coefficient $(q^{-1}-1)$ is negative and $dH/dt$ decreases with $t$ so that $H$ is a true Lyapunov function for the dynamics (\ref{start}). Conversely, if $0<q<1$, the coefficient $(q^{-1}-1)$ is positive and it is $-H$ that plays the role of a Lyapunov function. At $q = 1$ the dynamics are Hamiltonian by construction, but $dH/dt$ becomes singular at the point $q=0$. We can therefore expect that $q$ acts as a bifurcation parameter for equilibria in these particular systems, and almost surely more complicated behaviour may exist when we relax the assumptions on $H$. This leads to the consideration of three important limits. The Hamiltonian limit, $q \to 1$, has obvious implications for the system (\ref{start}). The limits $q \to 1$ and $q \to 0$ are more complicated to analyse, but from the above argument most likely represent bifurcations of the dynamics. By introducing deformed Hamiltonian vector fields we shall associate interesting geometric interpretations to each of these limits.

\subsection{Geometric background}
Throughout $(M, \omega)$ will denote a differentiable manifold $M$ of dimension $2n$ equipped with a closed and non-degenerate $2$-form $\omega$. Symplectic manifolds always admit an almost complex structure, i.e. an automorphism $J:TM \to TM$ of the tangent bundle satisfying $J^2=-id$, and $J$ is said to be {\em compatible} with $\omega$ if $G(\cdot, \cdot)= \omega(\cdot, J \cdot)$ is a Riemannian metric on $M$. We call $G$ the {\em standard Riemannian metric associated with $J$}. If the Nijenhuis tensor associated with $J$ vanishes then $J$ is said to be {\em integrable} and $(M, J)$ {\em complex}. If $J$ is both integrable and compatible with $\omega$ then the triple $(M,\omega, J)$ is called {\em K\"{a}hler} and the induced metric $G$ is called a {\em K\"{a}hler metric}. One can think of the triple $(\omega, J, G)$ on an equal footing, taking a K\"{a}hler metric as the starting point and varying the complex or symplectic structures independently.         

Recall that a submanifold $L \subset M$ of a symplectic manifold $(M,\omega)$ is called {\em Lagrangian} if $L$ is half the dimension of $M$ and $\omega$ vanishes when restricted to $L$. A theorem of Weinstein says that a sufficiently small neighbourhood of a Lagrangian submanifold $L \subset M$ can always be identified with a neighbourhood of the zero section in $T^*L$ by a diffeomorphism that preserves the symplectic form (i.e., a symplectomorphism). By a Lagrangian fibration $\pi: (M,\omega) \to B$ we mean a smooth fibration $\pi : M \to B$ over an $n$-dimensional base manifold $B$ such that at every point $x \in B$ the fibre $F_x = \pi^{-1}(x)$ is a Lagrangian submanifold of the symplectic manifold $(M, \omega)$. The obvious noncompact examples are cotangent bundles $\pi : T^*B \to B$ where the zero section is canonically identified with $B$, but it is rare to find particularly exotic examples of compact Lagrangian fibrations without singular fibres. The Arnold-Liouville theorem says that locally a Lagrangian fibration with compact, connected fibre is affinely isomorphic to the product of an affine space with a torus. Indeed, each compact, connected fibre of a smooth Lagrangian fibration must necessarily be a torus and the base must have canonical {\em integral affine structure}. This means that $B$ admits an atlas of coordinate charts whose transition functions are elements of the affine group $\mathbb{R}^n\rtimes GL(n, \Bbb Z)$. 

After choosing a compatible almost complex structure $J$ on the total space of a Lagrangian fibration $\pi: (M,\omega) \to B$ the standard Riemannian metric $G$ induces a decomposition of the tangent bundle $TM$ into vertical and horizontal subspaces
\begin{equation} \label{tandecomp}
TM = T^BM \oplus  T^FM .
\end{equation}
The subspace $T^FM$ is the tangent space to the fibres of $\pi: (M,\omega) \to B$ and $T^BM$ is its $G$-orthogonal complement. This in turn corresponds to a decomposition of the metric
\begin{equation}
G = G_B \oplus G_F ,
\end{equation}
where $G_B$ can often be identified with the pull-back under the projection of some Riemannian metric on $B$ (that we also call $G_B$ when it is understood). $G_F$ is the part that annihilates the orthogonal complement of the fibres. As above we prefer to speak of the choice of almost complex structure determining $G$, but it will sometimes be convenient to view the almost complex structure as being determined by a choice of metric on $B$. One such example is the analogue of the Sasaki metric $G^{Sas}$ \cite{Sas58} for the cotangent bundle $T^*B$ of a Riemannian manifold $(B,G_B)$, which uniquely determines an almost complex structure $J^{Sas}: G^{Sas}(\cdot , \cdot ) = \omega(\cdot , J^{Sas} \cdot)$. Here $\omega = d \theta$ is the canonical symplectic form where $\theta$ is the tautological $1$-form on the cotangent bundle $T^*B$. The pair $(T^*B, d \theta)$ is naturally a symplectic manifold and the fibres of $\pi: (T^*B,d \theta) \to B$ are Lagrangian submanifolds.  

Alongside the decomposition of $TM$ induced by the choice of $J$ there is a corresponding decomposition of the cotangent bundle 
\begin{equation}
T^*M = ( T^BM)^* \oplus ( T^FM)^* ,
\end{equation}
where $( T^BM)^*$ is the annihilator of $T^FM$ and $( T^FM)^*$ is that of $T^BM$. This induces a bigrading on differential forms of degree $a$
\begin{equation}
\Omega^a(M) = \bigoplus_{b+c=a} \Omega^{b,c}(M) ,
\end{equation}
with $\Omega^{b,c}(M)$ denoting the space of sections of $ \wedge^b  ( T^BM)^* \otimes  \wedge^c ( T^FM)^*$. Whenever there is such a splitting of differential forms the de Rham differential $d$ can be written as a sum of four components 
\begin{equation}
d = d_{1,0} + d_{0,1} + d_{2,-1} + d_{-1,2} ,
\end{equation}
where $d_{c,d} : \Omega^{a,b}(M) \to \Omega^{a+c,b+d}(M)$. We say that $\alpha \in \Omega^a(M)$ is of type $(b,c)$ if $\alpha \in \Omega^{b,c}(M)$. The Lagrangian condition together with non-degeneracy of the symplectic form implies $\omega$ is of type $(1,1)$. For integrability reasons the operator $d_{-1,2} $ vanishes when $\pi : M \to B$ is a smooth fibration so that after dropping the annoying indices by defining
\begin{equation}
\delta := d_{2,-1} , \quad \partial_+: = d_{1,0}  \quad \mbox{and} \quad \partial_- := d_{0,1} 
\end{equation}
the exterior derivative reduces to
\begin{equation} \label{decom}
d = \partial_+ + \partial_- + \delta .
\end{equation}
Using $d^2=0$ one obtains the relations
\begin{equation} \label{rule1}
\partial_-^2 = \delta^2= \partial_+ \partial_- + \partial_- \partial_+ = 
\partial_+ \delta + \delta \partial_+ =  \partial_+^2  + \partial_-  \delta + \delta \partial_- = 0 .
\end{equation}
The identity $\partial^2_-=0$ is attributed to the fact we have an involutive distribution on $M$ induced by the vertical directions of the fibration. Obstruction to the identity $\delta = 0$ comes down to the fact that the ($G$-orthogonal) complementary distribution might not necessarily be integrable. If it were, $M$ would admit a pair of transversal Lagrangian foliations that, although entirely possible, is a rather strict condition to impose. Manifolds with this property have been called {\em bi-Lagrangian}, {\em para-K\"{a}hler} or {\em $\mathbb{D}$-K\"{a}hler} in the literature \cite{Eta06,Cru96,Har12,Cor04}. In this paper however, we shall reserve the phrase {\em bi-Lagrangian} for integrability of the $J$-induced complementary distribution of an existing Lagrangian fibration, i.e. $(M,\omega,J)$ is bi-Lagrangian if and only if $\delta = 0$. 

Given a smooth function $H : M \to \mathbb{R}$ the Hamiltonian vector field $X_H \in TM$ on $(M, \omega)$ is the unique vector field defined by
\begin{equation}
\omega(X_H, \cdot ) = -dH .
\end{equation}
By Liouville's theorem $X_H$ generates an (exact) symplectomorphism of $M$ because its flow preserves the symplectic form
\begin{equation}
L_{X_H} (\omega) = d (\omega(X_H, \cdot )) = - d^2H = 0,
\end{equation}
where $L_{\xi}$ denotes the Lie derivative along the flow of the vector field $\xi$. $X_H$ uniquely defines a gradient vector field because of the fact that
\begin{equation}
G(JX_H, \cdot) = \omega(X_H, \cdot ) = -dH ,
\end{equation}
and so one may identify $JX_H$ with $-\nabla H$, the gradient of $-H$ taken with respect to the standard Riemannian metric associated with $J$. If $H(t)=H(t+1) : M \to \mathbb{R}$ defines a 1-periodic family of functions parameterised by $t \in S^1$ then it generates a family of exact symplectomorphisms $\phi_t : M \to M$ via 
\begin{equation}
\frac{d}{dt} \phi_t = X_{H(t)} \circ \phi_t , \quad \phi_0 = id .
\end{equation}
The {\em Arnold conjecture} states that for $M$ closed the number of non-degenerate 1-periodic solutions of the associated differential equation
\begin{equation} \label{ode}
\dot{z}(t) = X_{H(t)}(z(t)) ,
\end{equation}
is bounded below by the sum of the Betti numbers of $M$. 

\section{Deformed Hamiltonian vector fields}

We are now in a position to define the objects of primary interest to this paper. After introducing deformed Hamiltonian vector fields we will prove several properties that explain how they are related to their ordinary Hamiltonian counterparts.  

As before, let $\pi:(M,\omega) \to B$ be a Lagrangian fibration of a symplectic manifold $(M, \omega)$ and pick a smooth function $H: M \to \mathbb{R}$. Choose an almost complex structure $J$ on $M$ compatible with $\omega$ and consider the natural decomposition of the tangent bundle and standard metric
\begin{equation}
T^*M = T^BM \oplus  T^FM , \quad G = G_B \oplus G_F .
\end{equation}
The one-parameter family of metrics $\{G_q\}$ is formed by rescaling the metric in the fibre direction so that for each fixed value of $q \in (0,1]$ we have a Riemannian metric
\begin{equation}
G_q =  G_B \oplus q G_F  
\end{equation}
(we postpone the discussion of what happens for negative $q$ until the next section). Then $\{(M,G_q)\}$ defines a family of Riemannian manifolds with fibres whose volumes are monotonically decreasing as $q \to 0$. However, as before we prefer to view $\{G_q\}$ as being determined by the almost complex structures $\{J_q\}$ and consider the family $\{(M,\omega,J_q)\}$ defined by requiring that $G_q(\cdot,\cdot) = \omega(\cdot, J_q\cdot)$ for each $q \in (0,1]$. Using the decomposition of the exterior derivative induced by the Lagrangian fibration we also introduce a family of operators $\{d_q\}$ to go alongside this family of degenerating symplectic manifolds. 
\begin{definition}
For fixed $q \in (0,1]$ the deformed exterior derivative $d_q$ is given by
\begin{equation}
d_q :=  \partial_+ + q^{-1} \partial_-   + q \delta .
\end{equation} 
\end{definition}
The following proposition confirms that for each $q \in (0,1]$ the operator $d_q$ is a well-defined differential on $\Omega^*(M)$.
\begin{proposition}
\emph{$d_q^2 = 0$.}
\label{}
\end{proposition}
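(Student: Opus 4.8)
The plan is to square $d_q$ directly, collect terms by bidegree, and recognise each homogeneous component as one of the quadratic expressions that vanish by (\ref{rule1}). Writing $d_q = \partial_+ + q^{-1}\partial_- + q\delta$ and expanding (with $q$ treated as a scalar) gives
\begin{equation}
d_q^2 = \bigl(\partial_+^2 + \partial_-\delta + \delta\partial_-\bigr) + q^{-1}\bigl(\partial_+\partial_- + \partial_-\partial_+\bigr) + q^{-2}\,\partial_-^2 + q\bigl(\partial_+\delta + \delta\partial_+\bigr) + q^2\,\delta^2 .
\end{equation}
Since $d_{-1,2}=0$, the five bracketed operators occurring here are pairwise of distinct bidegree — $(2,0)$, $(1,1)$, $(0,2)$, $(3,-1)$ and $(4,-2)$ respectively — so the grouping is forced, and each bracket is exactly one of the combinations that (\ref{rule1}) declares to be zero. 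Hence $d_q^2 = 0$.

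The one point needing a moment's care is the type-$(2,0)$ bracket: one must check that $\partial_+^2$, $\partial_-\delta$ and $\delta\partial_-$ all occur with the same coefficient, namely $1$. This is so because $q^{-1}\partial_-\cdot q\delta$ and $q\delta\cdot q^{-1}\partial_-$ each contribute no net power of $q$, matching the $q^0$ coefficient of $\partial_+\cdot\partial_+$; thus the bracket is precisely $\partial_+^2 + \partial_-\delta + \delta\partial_-$, which vanishes by the last identity in (\ref{rule1}). More conceptually, the essential reason is that a rescaling $\partial_+ + a\partial_- + b\delta$ of $d$ squares to zero whenever $ab=1$: the $q^{-2}\partial_-^2$, $q^2\delta^2$, $(1,1)$- and $(3,-1)$-components vanish automatically by (\ref{rule1}), while the $(2,0)$-component equals $\partial_+^2 + ab(\partial_-\delta+\delta\partial_-) = (ab-1)(\partial_-\delta+\delta\partial_-)$, and the choice $a=q^{-1}$, $b=q$ realises $ab=1$. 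Equivalently, letting $q^{N}$ denote the invertible operator acting on $\Omega^{b,c}(M)$ as multiplication by $q^{c}$, one verifies $d_q = q^{-N}\circ d\circ q^{N}$ (the exponents $0,-1,+1$ on $\partial_+,\partial_-,\delta$ being exactly minus the fibre-degree shifts $0,+1,-1$), whence $d_q^2 = q^{-N}d^2q^{N} = 0$ immediately.

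I would present the direct expansion in the write-up, since it is self-contained and keeps the appeal to (\ref{rule1}) explicit, perhaps adding the conjugation remark as a one-line alternative. There is no real obstacle here: the statement is a bookkeeping consequence of $d^2=0$ together with the fibrewise bigrading, and the whole argument is a few lines; the only thing to be attentive to is that the prescribed powers of $q$ are precisely the ones compatible with the bigrading.
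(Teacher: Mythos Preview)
Your argument is correct and matches the paper's proof exactly: expand $d_q^2$, group by powers of $q$, and invoke the relations~(\ref{rule1}) term by term. The conjugation observation $d_q = q^{-N}\,d\,q^{N}$ is a pleasant bonus not present in the paper and gives an instant conceptual proof.
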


\begin{proof}
We have $d_q^2  = \partial^2_+ +\partial_-\delta + \delta \partial_- +q(\partial_+ \delta + \delta \partial_+) + q^{-1} (\partial_-\partial_+ + \partial_+\partial_-) + q^2 \delta^2 + q^{-2} \partial_-^2$ and by (\ref{rule1}) every term multiplying a given power of $q$ vanishes. \qed
\end{proof}
It must be emphasised that the definition of $d_q$ is only possible because we have a decomposition of the exterior derivative (\ref{decom}) that depends on the Lagrangian fibration {\em and also the choice of almost complex structure $J$}. Therefore the two families $\{d_q\}$ and $\{J_q\}$ are not independent and when we refer to one element, $d_q$, say, we will always have a corresponding object, $J_q$, in the other family. It is important to bear this in mind since this leads to two equivalent definitions of a deformed Hamiltonian vector field. 
\begin{definition}
The deformed Hamiltonian vector field generated by $H$ is the unique vector field $X^q_H \in TM$ that satisfies
\begin{equation}
\omega(X^q_H, \cdot ) = -d_qH .
\end{equation}
\end{definition}
This generalises the usual definition of a Hamiltonian vector field since $q$ serves as a ``deformation parameter'' for the exterior derivative in the sense that we return to the classical definition in the limit $q \to 1$. Once more we have actually defined an entire family $\{X^q_H\}$ parameterised by $q \in (0,1]$ and by writing $X^q_H$ we are referring to the deformed Hamiltonian vector field corresponding to $d_q$ and $J_q$. The next proposition provides an equivalent definition for $X^q_H$ in terms of the metric $G_q$. 
\begin{proposition}
Given a deformed Hamiltonian vector field $X^q_H$, the vector field $JX^q_{H_q}$ is the gradient of $-H$ defined using the metric $G_q$.
\label{grad}
\end{proposition}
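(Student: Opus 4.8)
\emph{Proof plan.} The statement to establish is the identity $JX^q_H=\nabla_{G_q}(-H)$, i.e.\ that $G_q(JX^q_H,\cdot)=-dH$. The plan is to exploit three structural facts already in place: $\omega$ is of type $(1,1)$ for the bigrading induced by $\pi$ and $J$ (so $\omega$ vanishes on $T^BM\times T^BM$ and on $T^FM\times T^FM$, and restricts to a non-degenerate pairing between $T^BM$ and $T^FM$, which follows from non-degeneracy of $\omega$ together with its being of type $(1,1)$); $J$ interchanges the two summands of $TM=T^BM\oplus T^FM$; and $G_q=G_B\oplus qG_F$ is block diagonal for that same splitting. A fourth, elementary observation is what makes $d_q$ relevant: on a function, $\delta H$ lies in $\Omega^{2,-1}(M)=0$, so $d_qH=\partial_+H+q^{-1}\partial_-H$ while $dH=\partial_+H+\partial_-H$; thus on $H$ the operators $d_q$ and $d$ differ only through the rescaling of the vertical piece $\partial_-H$.

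First I would unpack the defining equation $\omega(X^q_H,\cdot)=-d_qH$ using the splitting $X^q_H=(X^q_H)^B+(X^q_H)^F$. Testing against a horizontal vector $w$ annihilates the contribution of $(X^q_H)^B$ (type $(1,1)$) and collapses the right-hand side to $-\partial_+H(w)$; testing against a vertical vector $v$ annihilates the contribution of $(X^q_H)^F$ and collapses the right-hand side to $-q^{-1}\partial_-H(v)$. Running the same argument for the ordinary Hamiltonian vector field $X_H$ and comparing, non-degeneracy of the pairing $\omega\colon T^BM\times T^FM\to\mathbb{R}$ forces $(X^q_H)^F=(X_H)^F$ and $(X^q_H)^B=q^{-1}(X_H)^B$; that is, $X^q_H=q^{-1}(X_H)^B+(X_H)^F$.

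Next I would carry out the analogous bookkeeping for the gradient. Because $G_q$ is block diagonal, the defining equation $G_q(\nabla_{G_q}(-H),\cdot)=-dH$ restricted to $T^BM$ reads $G_B\big((\nabla_{G_q}(-H))^B,\cdot\big)=-\partial_+H$, and restricted to $T^FM$ reads $q\,G_F\big((\nabla_{G_q}(-H))^F,\cdot\big)=-\partial_-H$. Comparison with the $q=1$ case gives $(\nabla_{G_q}(-H))^B=(\nabla_G(-H))^B$ and $(\nabla_{G_q}(-H))^F=q^{-1}(\nabla_G(-H))^F$. Finally I would invoke the classical identity recorded in the excerpt, $JX_H=\nabla_G(-H)$: since $J$ sends $T^BM$ to $T^FM$ and conversely, $(\nabla_G(-H))^B=J\big((X_H)^F\big)$ and $(\nabla_G(-H))^F=J\big((X_H)^B\big)$. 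Substituting, $\nabla_{G_q}(-H)=J\big((X_H)^F\big)+q^{-1}J\big((X_H)^B\big)=J\big((X_H)^F+q^{-1}(X_H)^B\big)=JX^q_H$, as required.

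I expect the only delicate point to be keeping track of the powers of $q$, and the structural reason it works is worth isolating: the factor $q^{-1}$ sits on the horizontal part of $X^q_H$, $J$ carries that part into the vertical subspace, and the vertical subspace is precisely where $G_q$ inflates $G$ by a factor of $q$, so the two factors cancel and no power of $q$ survives in the final identity. Everything else is a routine consequence of the $(1,1)$-type of $\omega$, the block structure of $G_q$, and the interchange property of $J$; the argument is unchanged, with $H$ replaced by $H_q$ throughout, if the Hamiltonian is itself allowed to depend on $q$.
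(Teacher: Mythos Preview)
Your argument is correct. The paper's proof takes a slightly different, more compressed route: rather than computing the horizontal and vertical components of $X^q_H$ and of $\nabla_{G_q}(-H)$ separately and then matching them via the classical identity $JX_H=-\nabla_GH$, the paper introduces a rescaled test vector $Y^q$ (defined by $Y^q_+=q^{-1}Y_+$, $Y^q_-=Y_-$) and evaluates the defining equation $\omega(X^q_H,Y^q)=-d_qH(Y^q)$ directly; after rewriting $\omega(X^q_H,Y^q)=G(JX^q_H,Y^q)$ and using the block structure of $G$, multiplying through by $q$ yields $G_q(JX^q_H,Y)=-dH(Y)$ in one stroke. Your approach is a bit longer but has the advantage of making the relation $X^q_H=q^{-1}(X_H)^B+(X_H)^F$ explicit as an intermediate step---a formula the paper only records \emph{after} its proof, noting that the bijection $Y\mapsto Y^q$ carries $X_H$ to $X^q_H$. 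Your use of the fact that $J$ interchanges $T^BM$ and $T^FM$ is legitimate (it follows from $T^FM$ being Lagrangian and $J$ being $\omega$-compatible, which also forces $T^BM$ to be Lagrangian and $\omega$ to be of type $(1,1)$); the paper's argument sidesteps having to invoke this explicitly.
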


\begin{proof}
We want to show that $G_q(JX^q_H, Y) = -dH(Y)$ for all $Y \in TM$. Using the decomposition of $TM$ we write the vector field $Y \in TM$ as $Y = Y_+ + Y_-$, where $Y_+ \in T^BM$ and $Y_- \in T^FM$, and define the new vector field $Y^q$ by setting $Y^q_+ = q^{-1}Y_+$ and $Y^q_- = Y_-$. Note $\delta =0$ when acting on functions so that 
\begin{equation}
-d_qH(Y^q) = -q^{-1} \partial_+H(Y_+) - q^{-1}\partial_-H(Y_-) = -q^{-1}dH(Y)
\end{equation}
and using $\omega$-compatibility of $J$ we have
\begin{equation}
\omega(X^q_H,Y^q) = q^{-1}G_B(JX^q_H,Y_+) + G_F(JX^q_H,Y_-) .
\end{equation}
Using the definition of a deformed Hamiltonian vector field we can equate both expressions above and multiply by $q$ to obtain
\begin{equation}
-dH(Y) = G_B(JX^q_H,Y_+) + qG_F(JX^q_H,Y_-) = G_q(JX^q_H,Y) .
\end{equation}
\qed
\end{proof}
Thus, the vector field $JX^q_H$ on the manifold $(M,\omega, J)$ is defined to be the vector field that would be a gradient with respect to the standard Riemannian metric on the manifold $(M,\omega,J_q)$. That is to say, $JX^q_H= - \nabla_qH$ where $\nabla_q$ is the gradient associated with $G_q$. Although somewhat more convoluted, this definition makes explicit the choice of almost complex structure in the construction of a deformed Hamiltonian vector field. Also note that the bijection $Y \to Y^q$ used in the proof of Proposition \ref{grad} maps any Hamiltonian vector field $X_H$ to the corresponding deformed Hamiltonian vector field $X^q_H$, providing yet another equivalent definition.    

The first definition of a deformed Hamiltonian vector field is more natural from the perspective of understanding the flow of $X^q_H$ and also because geometric properties of the Lagrangian fibration $\pi: (M,\omega) \to B$ are reflected in the analytic properties of $d_q$. It turns out that these properties are tied up with the particular choice of function $H$ used to generate the deformed Hamiltonian vector field. We will now describe what this means.
\begin{definition}
Functions $H: M \to \mathbb{R}$ satisfying the property $\partial_- \partial_+ H = 0$ are called simple, whilst functions satisfying $\partial_+ H = 0$ are called exceptionally simple. 
\end{definition}
It is obvious that exceptionally simple implies simple, but the converse is not true. The exceptionally simple condition is intrinsic to the fibration whereas the simple condition depends on the choice of almost complex structure. Sometimes it will prove useful to decompose the function $H$ as $H=H_+ + \hat{H} + H_-$ where $\partial_{\pm}H_{\mp} = 0$. $\hat{H}$ is the part of $H$ that is not necessarily simple nor exceptionally simple, and in particular one has that $\partial_+ \partial_- H = \partial_+ \partial_- \hat{H}$ since $H_+ + H_-$ is simple. Of course this decomposition is not unique, but we assume it is ``maximal'' in the sense that $\hat{H} = 0 $ whenever possible. To get a feel for what the simple condition really means we choose a Darboux coordinate chart $\{x_i,y_j \}$ for $T^*\mathbb{R}^n$ as a model for the Lagrangian fibration $(M,\omega,J)$ in which $\{x_i\}$ are coordinates on the base $\mathbb{R}^n$ and $\{y_i\}$ are coordinates on the fibres. A generic Hamiltonian is just an arbitrary function $H(x,y)$ of all the coordinates and one finds that
\begin{equation} \label{local}
\partial_- \partial_+ H(x,y) = \sum_{i,j=1}^n \frac{\partial^2 H}{\partial y_i \partial x_j} dy_i \wedge dx_j ,
\end{equation}
so that $H$ being simple is equivalent to $H(x,y) = H'(x) + H''(y)$. Likewise, $H$ being exceptionally simple is equivalent to setting $H(x,y)=H''(y)$ as a function of the fibre coordinates only. The following proposition describes how the flow of $X^q_H$ depends on the choice of Hamiltonian $H$ by answering the question of when a deformed Hamiltonian field generates a symplectomorphism.
\begin{proposition}
For $q \neq 1$ a deformed Hamiltonian vector field $X^q_H$ on $(M,\omega,J)$ is symplectic if $H$ is of the form $H= H_+ + H_-$ with $\partial_{\pm}H_{\mp} = 0$. If, in addition, $(M,\omega,J)$ is bi-Lagrangian then the flow of $X^q_H$ is symplectic if and only if $H$ is simple. 
\label{simpletest}
\end{proposition}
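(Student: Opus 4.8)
The plan is to test when the flow of $X^q_H$ preserves $\omega$, namely when $L_{X^q_H}\omega = 0$. Since $\omega$ is closed, this reduces — exactly as in the computation for an ordinary Hamiltonian vector field above — to the condition $d\bigl(\omega(X^q_H,\cdot)\bigr) = -\,d(d_qH) = 0$, so the whole problem is the evaluation of the $2$-form $d(d_qH)$. Because $\delta$ annihilates functions we have $d_qH = \partial_+H + q^{-1}\partial_-H$; applying $d = \partial_+ + \partial_- + \delta$ and simplifying with the relations (\ref{rule1}) — concretely $\partial_-^2 = 0$, $\partial_+\partial_- = -\partial_-\partial_+$, $\delta\partial_+H = 0$ on a function, and $\partial_+^2H = -\delta\partial_-H$ — I expect the various powers of $q$ to collapse so that
\[
L_{X^q_H}\omega \;=\; (q^{-1}-1)\bigl(\partial_+^2H - \partial_+\partial_-H\bigr).
\]
The structural observation is then that $\partial_+^2H$ is of type $(2,0)$ while $\partial_+\partial_-H$ is of type $(1,1)$, so they lie in different summands of the bigrading; consequently, for $q\neq 1$ the right-hand side vanishes if and only if $\partial_+^2H = 0$ and $\partial_+\partial_-H = 0$ separately.

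To finish I would verify these two identities under the respective hypotheses. For the first claim, suppose $H = H_+ + H_-$ with $\partial_-H_+ = \partial_+H_- = 0$. Then $\partial_+\partial_-H_+$ and $\partial_+^2H_-$ vanish at once, $\partial_+\partial_-H_- = -\partial_-\partial_+H_- = 0$ by (\ref{rule1}), and $\partial_+^2H_+ = -(\partial_-\delta + \delta\partial_-)H_+ = 0$ because $\delta H_+$ and $\partial_-H_+$ both vanish; adding these gives $\partial_+^2H = \partial_+\partial_-H = 0$, hence $X^q_H$ is symplectic. For the second claim, when $(M,\omega,J)$ is bi-Lagrangian we have $\delta = 0$, so (\ref{rule1}) forces $\partial_+^2 = 0$ as an operator and the obstruction collapses to the single condition $\partial_+\partial_-H = 0$; since $\partial_+\partial_- = -\partial_-\partial_+$ this is precisely $\partial_-\partial_+H = 0$, the defining property of a simple function, and the flow of $X^q_H$ preserves $\omega$ exactly when $L_{X^q_H}\omega = 0$.

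None of the individual steps is hard — everything rests on the decomposition (\ref{decom}) and the relations (\ref{rule1}) — so the main obstacle is really just the bigraded bookkeeping in the evaluation of $d(d_qH)$. The one point that genuinely needs care is separating that $2$-form into its $(2,0)$- and $(1,1)$-components, since it is exactly this separation, together with the automatic vanishing of the $(2,0)$-part in the bi-Lagrangian case, that upgrades the first (merely sufficient) statement to the sharp ``if and only if'' of the second.
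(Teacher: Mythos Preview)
Your proof is correct and follows essentially the same line as the paper's: both compute $L_{X^q_H}\omega = -dd_qH = (q^{-1}-1)(\partial_+^2 H + \partial_-\partial_+ H)$ (your $-\partial_+\partial_-H$ being the same thing via the anticommutation relation), observe that the two summands live in different bigraded pieces so must vanish separately, and then check the hypotheses kill them. You are in fact slightly more explicit than the paper in verifying $\partial_-\partial_+H = 0$ for $H = H_+ + H_-$, which the paper leaves implicit.
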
    

\begin{proof}
After a straightforward calculation it becomes clear that in general $X^q_H$ does not generate a symplectomorphism unless $q=1$ since the $1$-form $d_qH$ is not necessarily closed
\begin{equation}
L_{X^q_H}(\omega) = -dd_qH = (q^{-1}-1)(\partial^2_+ + \partial_- \partial_+) H  .
\end{equation} 
The $2$-forms $\partial^2_+H$ and  $\partial_- \partial_+ H$ are of different type and so we can not have $-\partial^2_+H = \partial_- \partial_+  H$ unless both are zero, hence proving that $H$ must be simple if $\partial^2_+=0$. Using relations (\ref{rule1}), if $H = H_+ + H_-$ then $\partial^2_+H=\partial^2_+ H_+ = -\delta \partial_-H_+ = 0$ and so this condition is sufficient whenever $(M,\omega,J)$ is not bi-Lagrangian.   
\qed
\end{proof}

We may also ask when a deformed Hamiltonian vector field is {\em conformally symplectic}, i.e. generates a conformally symplectic diffeomorphism $\phi: M \to M$ that preserves the symplectic form up to some constant $1 \neq c \in \mathbb{R}$. This would serve as a generalisation of Duffin's dissipative version of Liouville's theorem \cite{Duf62}. The conformal symplectomorphisms form a group that, like the group of symplectomorphims, is one of Cartan's six classes of groups of diffeomorphisms  on a manifold $M$.  Conformally symplectic vector fields have also previously been used to generalise simple mechanical systems with dissipation \cite{Woi98,McL01}. The proposition below answers the question of when a deformed Hamiltonian vector field is conformally symplectic on a bi-Lagrangian manifold.
\begin{proposition}
For $q \neq 1$ a deformed Hamiltonian vector field $X^q_H$ on bi-Lagrangian $(M, \omega, J)$ is conformally symplectic if and only if $\omega = c' \partial_- \partial_+H$ for some nonzero constant $c' \in \mathbb{R}$.
\label{conf}
\end{proposition}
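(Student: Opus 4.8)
The plan is to reduce everything to the Lie–derivative formula already extracted in the proof of Proposition~\ref{simpletest} and then argue by type. First I would record that on a bi-Lagrangian manifold $\delta=0$, so the relations (\ref{rule1}) collapse: in particular $\partial_+^2=-\partial_-\delta-\delta\partial_-=0$, along with $\partial_-^2=0$ and $\partial_+\partial_-+\partial_-\partial_+=0$. Hence the computation $L_{X^q_H}(\omega)=-d\,d_qH=(q^{-1}-1)(\partial_+^2+\partial_-\partial_+)H$ from before simplifies on bi-Lagrangian $(M,\omega,J)$ to
\begin{equation}
L_{X^q_H}(\omega)=(q^{-1}-1)\,\partial_-\partial_+H .
\end{equation}
One may also note that here $\partial_-\partial_+H$ is $d$-closed (it is annihilated by both $\partial_+$ and $\partial_-$ using the same relations) and of type $(1,1)$, just like $\omega$, so it is at least a legitimate candidate to be proportional to $\omega$.

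Next I would recall the definition of a conformally symplectic flow: $\phi_t$ is conformally symplectic with factor $c\neq 1$ exactly when $\phi_t^*\omega=c_t\,\omega$ for a smooth family of real constants $c_t$ not identically $1$. Differentiating at $t=0$ gives $L_X(\omega)=\mu\,\omega$ with $\mu=\dot c_0\in\mathbb{R}$ a genuine constant, and $\mu\neq 0$ precisely when the factor is non-trivial; conversely, if $L_X(\omega)=\mu\,\omega$ for a nonzero constant $\mu$ then $\frac{d}{dt}\phi_t^*\omega=\phi_t^*(L_X\omega)=\mu\,\phi_t^*\omega$ integrates to $\phi_t^*\omega=e^{\mu t}\omega$, so the flow is conformally symplectic. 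Thus ``$X^q_H$ is conformally symplectic'' is equivalent to ``$L_{X^q_H}(\omega)=\mu\,\omega$ for some nonzero constant $\mu$''.

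Combining the two observations closes the argument. Since $q\neq 1$ the scalar $q^{-1}-1$ is nonzero, so $(q^{-1}-1)\,\partial_-\partial_+H=\mu\,\omega$ with $\mu$ a nonzero constant is equivalent to $\omega=c'\,\partial_-\partial_+H$ with $c'=(q^{-1}-1)/\mu$ a nonzero constant; conversely that relation gives $L_{X^q_H}(\omega)=\big((q^{-1}-1)/c'\big)\omega$ with nonzero constant coefficient. The only point that deserves care — and essentially the only place the argument could go wrong — is the constancy of the conformal factor: if one merely posited $L_X\omega=\mu\,\omega$ with $\mu\in C^\infty(M)$, then applying $d$ would give $d\mu\wedge\omega=0$, which forces $d\mu=0$ by non-degeneracy of $\omega$ when $\dim M\geq 4$ (the case $\dim M=2$ would have to be excluded or treated separately). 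With the definition of conformal symplectomorphism adopted above this subtlety does not arise, since the factor $c_t$ is by definition a constant on $M$ for each $t$.
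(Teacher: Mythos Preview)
Your proof is correct and follows essentially the same route as the paper: compute $L_{X^q_H}(\omega)=(q^{-1}-1)\partial_-\partial_+H$ in the bi-Lagrangian case and equate it with $c\,\omega$. You are simply more explicit than the paper in justifying why $\partial_+^2H$ drops out (via $\delta=0$), in spelling out both directions of the equivalence, and in addressing the constancy of the conformal factor; the paper's proof states only the key identity and the forward implication.
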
    

\begin{proof}
When $(M,\omega,J)$ is bi-Lagrangian the condition that $X^q_H$ generates a conformal symplectomorphism is that
\begin{equation}
L_{X^q_H}(\omega) = -dd_qH = (q^{-1}-1) \partial_- \partial_+ H  = c \omega \ 
\end{equation} 
for some nonzero constant $c \in \mathbb{R}$. Clearly this implies $\omega = c^{-1}(q^{-1}-1)\partial_- \partial_+ H$. 
\qed
\end{proof}
Thus, on a bi-Lagrangian manifold $(M,\omega,J)$ a deformed Hamiltonian vector field $X^q_H$ is conformally symplectic whenever $\omega = \partial_-\partial_+K$ is defined {\em globally} by the analogue of a K\"{a}hler potential $K: M \to \mathbb{R}$ with $H = H_+ + K + H_-$ satisfying $ \partial_{\pm}H_{\mp} = 0$. In Duffin's local treatment this condition manifests itself as the assumptions on $\partial^2 H/\partial x_i \partial y_i$ (compare with Equation \ref{local}). Globally this is yet again a very strict condition to impose on a symplectic manifold since, as in the K\"{a}hler case, when $(M,\omega,J)$ is bi-Lagrangian $\omega$ is usually only determined by a potential {\em locally} \cite{Cor04}. Examples of these manifolds do exist however. Note that because $\omega$ is necessarily of type $(1,1)$ Proposition \ref{conf} breaks down when $(M,\omega,J)$ is not bi-Lagrangian unless we impose the additional condition that $\partial^2_+H$ vanishes. We can not ask for $H$ to be exceptionally simple (our definition of a conformally symplectic vector field excludes the symplectic case), so $H$ must be a non-simple Hamiltonian that satisfies $\partial^2_+H = 0$ with $\omega = c' \partial_- \partial_+H$. This further restricts the types of functions that may be considered. 

Deformed Hamiltonian vector fields can be used to induce an algebraic structure over the differentiable functions on $M$. It is well-known that the Poisson bracket induces the structure of a Lie algebra on $\Omega^0(M)$. In 1948 A. A. Albert introduced the concept of a {\em Lie-admissible algebra} \cite{Alb48}, defined for an algebra $\mathcal{U}$ whose commutator algebra (the anti-commutator algebra with multiplication $u*v = uv -vu$ for all $u,v \in \mathcal{U}$) admits the structure of a Lie algebra. Following Duffin's work on the deformed Poisson bracket, this motivates the following definition.

\begin{definition}
The bracket $\{ \cdot , \cdot \}_q : \Omega^0(M) \times \Omega^0(M) \to \Omega^0(M)$ is defined by $\{H,F\}_q = \omega(X^q_H,X_F)$. 
\end{definition}
We can then prove.
\begin{proposition}
The algebra of smooth functions on $M$ equipped with multiplication $H *F \equiv \{ H,F\}_q$ for $H,F \in \Omega^0(M)$ is Lie-admissible. 
\label{alg}
\end{proposition}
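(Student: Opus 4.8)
The plan is to reduce the statement to the single observation that the commutator bracket of $\{\cdot,\cdot\}_q$ is a constant multiple of the ordinary Poisson bracket. Recall that Lie-admissibility of the algebra $(\Omega^0(M),*)$ with $H*F=\{H,F\}_q$ only requires that the commutator algebra, with bracket $[H,F]:=H*F-F*H=\{H,F\}_q-\{F,H\}_q$, satisfy the Jacobi identity; bilinearity of $*$ is clear since $X^q_H$ depends linearly on $H$ and $X_F$ on $F$, and antisymmetry of a commutator is automatic.

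First I would compute $\{H,F\}_q$ in the splitting $TM=T^BM\oplus T^FM$. Writing $X_H=(X_H)_B+(X_H)_F$, the proof of Proposition \ref{grad} shows $X^q_H=q^{-1}(X_H)_B+(X_H)_F$, and because the Lagrangian condition forces $\omega$ to be of type $(1,1)$ it pairs only the $T^BM$ component of one argument with the $T^FM$ component of the other. Hence
\begin{equation}
\{H,F\}_q=\omega(X^q_H,X_F)=q^{-1}\,\omega\big((X_H)_B,(X_F)_F\big)+\omega\big((X_H)_F,(X_F)_B\big),
\end{equation}
so that in particular $\{H,F\}_1=\omega(X_H,X_F)$ is the usual (Poisson) Lie bracket, which I will denote $\{H,F\}$. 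Equivalently, one may note that on functions $d_qH=dH+(q^{-1}-1)\partial_-H$ and $\omega((X_H)_B,\cdot)=-\partial_-H$ on $T^FM$, giving $\{H,F\}_q=\{H,F\}+(q^{-1}-1)\,\omega\big((X_H)_B,(X_F)_F\big)$.

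Next I would antisymmetrise: rewriting $\{F,H\}_q$ in terms of the same two pairings by skew-symmetry of $\omega$, the coefficients of both pairings become $1+q^{-1}$ and they recombine, by type $(1,1)$, into $\omega(X_H,X_F)$, so
\begin{equation}
\{H,F\}_q-\{F,H\}_q=(1+q^{-1})\,\{H,F\}.
\end{equation}
Since $(\Omega^0(M),\{\cdot,\cdot\})$ is a Lie algebra, the rescaled bracket $(1+q^{-1})\{\cdot,\cdot\}$ is too --- the cyclic sum of iterated brackets just acquires a factor $(1+q^{-1})^2$ --- so the commutator algebra of $(\Omega^0(M),*)$ is a Lie algebra and $*$ is Lie-admissible. (The commutator algebra degenerates to the abelian one precisely at $q=-1$, which is of course still a Lie algebra.)

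The argument is essentially bookkeeping, so I do not expect a real obstacle; the one point requiring care is the systematic use of the type-$(1,1)$ property of $\omega$, which is exactly what makes the asymmetric $q^{-1}$-rescaling of the horizontal part of $X^q_H$ cancel out cleanly once the bracket is antisymmetrised. It is worth emphasising that this proves slightly more than is stated --- namely that the commutator bracket of $\{\cdot,\cdot\}_q$ is, up to the nonzero scalar $1+q^{-1}$, nothing other than the Poisson bracket --- and that Lie-admissibility here is a genuinely mild condition, since the product $H*F=\{H,F\}_q$ is in general neither associative nor (anti)commutative.
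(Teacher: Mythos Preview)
Your proof is correct and follows essentially the same route as the paper: both arguments use the identification $X^q_H=q^{-1}(X_H)_B+(X_H)_F$ (the paper phrases this as the bijection $Y\mapsto Y^q$ from the proof of Proposition~\ref{grad}) together with the type~$(1,1)$ property of $\omega$ to obtain the key identity $\{H,F\}_q-\{F,H\}_q=(1+q^{-1})\,\omega(X_H,X_F)$, and then appeal to the Jacobi identity for the ordinary Poisson bracket. Your write-up is in fact slightly more explicit about why the cross terms recombine, and your remark on the degenerate case $q=-1$ is a nice addition.
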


\begin{proof}
We need to show that $\omega(X^q_H,X_F) - \omega(X^q_F,X_H)$ satisfies the Jacobi identity. Explicitly, we have
\begin{equation}
\omega(X^q_H,X_F) - \omega(X^q_F,X_H) = -d_qH(X_F) -dH(X^q_F)  .
\end{equation} 
Using the identification of $X^q_F$ with $X_F$ as in the proof of Proposition \ref{grad} the right-hand side can be decomposed and individual terms regrouped to yield 
\begin{equation}
\omega(X^q_H,X_F) - \omega(X^q_F,X_H) = (1 + q^{-1}) \omega(X_H,X_F)  .
\end{equation}
This bilinear form is therefore identified with $(1+q^{-1})$ times the canonical Poisson bracket, which satisfies the Jacobi identity following classical results in symplectic geometry. 
\qed
\end{proof}
Unlike the Lie subalgebra formed by Hamiltonian vector fields however, there is no subalgebra that may be easily constructed from deformed Hamiltonian vector fields. This is because any such algebra does not close under the action of the Lie bracket.  
  
As in the Hamiltonian case, a deformed Hamiltonian vector field $X^q_{H} \in TM$ determines a differential equation
\begin{equation} \label{ode2}
\dot{z}(t) = X^q_H(z(t)) , 
\end{equation}
which is the appropriate generalisation of (\ref{start}). With a time-dependent Hamiltonian $H: S^1 \times M \to \mathbb{R}$ there is an associated two-parameter family of diffeomorphisms $\phi^q_t : M \to M$ generated via
\begin{equation}
\frac{d}{dt} \phi^q_t = X^q_{H(t)} \circ \phi^q_t , \ \ \ \phi^q_0 = id ,
\end{equation}
for each value of $q \in (0,1]$. These are symplectomorphisms when $H(t)=H_+(t)+H_-(t)$ (or conformal symplectomorphisms when $H(t)$ and $(M,\omega,J)$ satisfy the requirements of Proposition \ref{conf}), but in general they do not preserve $\omega$ unless $q=1$. It would therefore not be prudent to formulate a deformed analogue of the Arnold conjecture for solutions to a time-dependent version of (\ref{ode2}). However, when $H$ is independent of time the Arnold conjecture follows trivially from the fact that the critical points of $H$ are constant solutions of (\ref{ode2}) and therefore 1-periodic. Moreover, the fact that deformed Hamiltonian vector fields may reduce to ordinary Hamiltonian vector fields on certain domains of $H$ suggests an analogue of Floer theory might apply to particular submanifolds of $(M,\omega)$. This will be expanded upon in the next section. It therefore seems plausible to see how far one can get following the approach of Floer and studying solutions of the partial differential equation
\begin{equation} \label{qFloer}
\frac{\partial u}{\partial s} + J_t(u) \left( \frac{ \partial u}{\partial t} - X^q_{H(t)}(u) \right) = 0 ,
\end{equation}
for smooth maps $u : \Sigma \to M$ from a Riemann surface $\Sigma$ with appropriate boundary conditions. If $H$ does not depend on time then time-independent solutions to the deformed Floer equation (\ref{qFloer}) satisfy
\begin{equation}
\frac{du}{ds} + \nabla_q H(u) =0 .
\end{equation}
These trajectories are flows of the gradient of $-H$ defined with respect to the deformed metric $G_q$. In the next section we hint at a model for deformed Floer theory that is designed to expand upon this point. Namely, we consider a finite-dimensional gradient flow problem on a Lagrangian fibration equipped with the metric $G_q$. 

\section{Applications to symplectic geometry}

\subsection{Para-complex geometry and mirror symmetry}

Important examples of Lagrangian fibrations are Lagragian torus fibrations, and for the key ideas behind the classification of these the reader is referred to \cite{Dui80,Zun03}. From the Arnold-Liouville theorem a smooth Lagrangian fibration $\pi : (M,\omega) \to B$ with connected, compact fibres is necessarily a torus fibration over an integral affine manifold $B$ with transition functions in the subgroup $\Bbb R^n \rtimes GL(n,\Bbb Z) \subset \mbox{Aff}(\Bbb R^n)$. The integral affine structure determines a subbundle $\Lambda^* \subset T^*B$ of integral 1-forms and the holonomy of $\Lambda^*$ is called the affine monodromy of the Lagrangian torus fibration. The fibration $\pi : M \to B$ is a principal torus bundle if and only if the affine monodromy is trivial and globally there exists an isomorphism $M \cong T^*B/\Lambda^*$ if and only if $\pi : M \to B$ admits a global section. In symplectic coordinates on a Lagrangian torus fibration the metric $G_q$ is therefore
\begin{equation}
G_q = (G_B)_{ij} dx_i \otimes dx_j + q(G_B^{-1})_{ij} dy_i \otimes dy_j
\end{equation} 
and we find that the diameter of $M$ stays bounded whilst the volume of the fibres shrink to zero as $q \to 0$. Translating this to the family of almost complex structures $\{J_q\}$ we recognise the limit $q \to 0$ as the large complex structure limit of mirror symmetry (see \cite{Str96,Leu05,Kon01} and references therein). Thus, as suggested in subsection 2.1, one may assign a geometric interpretation to this limit. 

For the definition of deformed Hamiltonian vector fields it seemed more natural to assume that $q>0$, but often one expects to have $q<0$. In this case the metric $G_q$ is no longer Riemannian but instead a psuedo-Riemannian metric with neutral signature. In fact when $q= -1$ the metric 
\begin{equation}
G_{-1}= G_B \oplus -G_F 
\end{equation}  
is precisely the standard metric induced by a choice of {\em almost $\mathbb{D}$-complex structure} $T$ on $M$ (here we use the terminology of Harvey and Lawson \cite{Har12} whilst others call $T$ an {\em almost bi-Lagrangian}, {\em para-complex}, or an {\em almost product structure}). In analogy with the complex case an almost $\mathbb{D}$-complex structure $T$ is an automorphism $T:TM \to TM$ satisfying $T^2=id$ with $G_{-1}(\cdot , \cdot ) = \omega(\cdot , T \cdot) $ the {\em standard psuedo-Riemannian metric associated with $T$}. Here the specific choice of $T$ is determined by the choice of $J$. In particular, the decomposition (\ref{tandecomp}) of $TM$ induced by $J$ coincides with the eigenspace decomposition of $TM$ induced by $T$. For now we assume that both $J$ and $T$ are integrable. This means that $(M,\omega,J)$ is K\"{a}hler and $(M,\omega,T)$ is $\mathbb{D}$-K\"{a}hler (equivalently $(M,\omega,J)$ is bi-Lagrangian). By allowing negative values of $q$ the family $\{J_q\}$ extended to the interval $q \in [-1,1]$ traces out a path in the combined space of all $\omega$-compatible ($\mathbb{D}$-)complex structures, the $\mathbb{D}$-complex structures compatible in the sense that $\omega(\cdot , T \cdot)$ is a metric of neutral signature on $M$. This path starts at the $\mathbb{D}$-complex structure $T$ with $q = -1$ and ends at the complex structure $J$ with $q=1$. However, it must also pass through the singular point at $q=0$ where the metric $G_q$ degenerates on the fibres of $\pi: (M,\omega) \to B$. As described previously, this point represents a boundary or cusp in the space of compatible complex structures and the limit $q \to 0^+$ is precisely the large complex structure limit of mirror symmetry in which the SYZ conjecture is expected to hold \cite{Kon01}. 

Allowing $q$ to vary across the interval $[-1,1]$ automatically extends semi-flat mirror symmetry to include a duality with $\mathbb{D}$-K\"{a}hler geometry and it turns out that analogues of special Lagrangian submanifolds (the basis of the SYZ conjecture) have already been studied there \cite{Har12}. In particular, it is the Ricci-flat, affine $\mathbb{D}$-K\"{a}hler manifolds that provide the natural duals of Calabi-Yau manifolds and because of their bi-Lagrangian structure these are also Lagrangian torus fibrations over an affine base equipped with Koszul metric. If suitably defined, the parametrisation $\{J_q\}$ should provide a way to move between K\"{a}hler and $\mathbb{D}$-K\"{a}hler Lagrangian fibrations, perhaps as submanifolds in a higher-dimensional ambient space. Mirror symmetry could then be used to set up a quadrality involving mirror pairs of both types of geometry. To the best of our knowledge nothing along these lines has appeared in the literature so far.

\subsection{Morse theory}

It is unlikely that a generic deformed Hamiltonian vector field will admit periodic orbits due to the dissipative nature of its flow. However, Proposition \ref{simpletest} suggests it might still be possible for a non-Hamiltonian $X^q_H$ to display periodic behaviour should there exist a submanifold of $M$ where $\hat{H} = 0$ (recall the decomposition $H=H_+ + \hat{H} + H_-$). Here we sketch out a Morse-type model for the associated Floer theory on cotangent bundles $(T^*B, d\theta)$. This is essentially a conjectural extension of the Lagrange multiplier Morse theory developed in \cite{Fra06,Sch14}. 

Frauenfelder  \cite{Fra06} (and Schecter-Xu \cite{Sch14} for the rank one case) considered Morse theory on the trivial vector bundle $B \times \mathcal{V}^* \to B$ using a smooth function $F : B \times \mathcal{V}^* \to \mathbb{R}$ given by
\begin{equation}
F(x,v^*) = f(x) + v^*(w(x)) ,
\end{equation} 
where $v^* \in \mathcal{V}^*$, $f : B \to \mathbb{R}$ and $w : B \to \mathcal{V}$. Here $\mathcal{V}^*$ is the dual of a finite dimensional vector space $\mathcal{V}$. If $0$ is a regular value of $w$, then it is a well-known fact that there exists a bijective correspondence $\lambda: \mbox{Crit}(F) \to \mbox{Crit}(f|_{w^{-1}(0)})$ between critical points of $F$ and critical points of $f|_{w^{-1}(0)}$. Using several different approaches, both \cite{Fra06} and \cite{Sch14} prove the existence of a homotopy between the moduli spaces of gradient flow lines of $F$ on $B \times \mathcal{V}^*$ and those of $f|_{w^{-1}(0)}$ on $w^{-1}(0)$. Most relevant to us is the adiabatic limit method used in  \cite{Sch14} to show that gradient flow lines of $F$ converge to those of $f|_{w^{-1}(0)}$ as the volume of the fibre is taken to zero. In general, $B \times \mathcal{V}^*$ is of rank $k < n$ so that $w^{-1}(0) \subset B$ is a submanifold of dimension $n -k > 0$. For a cotangent bundle the fibres are always of dimension $n$ however, which means that $w$ must degenerate on certain fibre directions if we are to ensure $n-k$ is nonzero. Even if $f$ is Morse this necessarily implies $F$ can only ever be Morse-Bott so that something must be done to  account for the ``left over'' directions of the fibration.  

The above issue is most easily addressed by perturbing $F$ using a family of Morse functions having compact support on the degenerate directions associated with critical submanifolds. Although $F$ is Morse-Bott its perturbation becomes Morse \cite{Ban13}. It is this approach that realises the decomposition $H = H_+ + \hat{H} + H_-$, with $H_-$ the perturbing Morse function, and $\hat{H}$ and $H_+$ identified with the appropriate generalisations of $v^*(w(x))$ and $f(x)$, respectively. Our assumption on the function $\hat{H}$ is that $\hat{H}= \theta(\hat{w})$ for some $\hat{w} \in T(T^*B)$ whose horizontal projection is a vector field $w: B \to TB$ that has zero set $w^{-1}(0) \subset B$ with codimension $ k$ as a closed, oriented submanifold of $B$. We use $w_i$ to denote the $n$ functions $w_i : B \to \mathbb{R}$ defined by $w$ in an appropriate trivialisation and impose that the vertical projection of $dw$ has rank $k$. The Hamiltonian family $H_q$ is then constructed using a Morse function $H_+ = f: B \to \mathbb{R}$ together with a function $H_- = g$ whose domain will include the critical submanifolds. We assume further that the restriction $f|_{w^{-1}(0)}$ is a Morse function on $w^{-1}(0)$ and extending $g$ to the whole of $T^*B$ using cut-off functions we obtain the Hamiltonian 
\begin{equation}
H_{q}(z) = f(\pi(z))+ \langle z, w(\pi(z)) \rangle + q g(z) , \ \ \ q \in (0,1] ,
\end{equation}
where $z \in T^*B$. The critical point set of $H_0 \equiv H_{q = 0}$ (which is the analogue of $F$ in \cite{Fra06,Sch14}) consists of pairs $(x,y)$ satisfying (in local coordinates)
\begin{equation}
w_i(x) = 0 , \ \ \ df(x)+ y_idw_i(x) = 0 ,
\end{equation}
which by the assumptions on $w$ is just the condition that $x \in w^{-1}(0)$ is a critical point of $f|_{w^{-1}(0)}$. The combination of the $y_i$ spanning the vertical kernel of $dw$ define a $(n-k)$-dimensional fibre $Z_x$ over $x$ that we assume can be extended to a proper fibre bundle $ Z \to w^{-1}(0)$. Because $f|_{w^{-1}(0)}$ is a Morse function with isolated critical points the critical point set of $H_0$ is a disjoint union of isolated critical submanifolds $\mathcal{V}_x \cong Z_x$ that are identified with the fibres of $Z$ over each critical point $x \in \mbox{Crit}(f|_{w^{-1}(0)})$,
\begin{equation}
\mbox{Crit}(H_0) = \coprod_{x \in \mbox{Crit}(f|_{w^{-1}(0)})} \mathcal{V}_x .
\end{equation}
Choosing $g$ to define a family of Morse functions $g_x: Z_x \to \mathbb{R}$ parameterised by $x \in w^{-1}(0)$ means that $H_q$ is a Morse function on $T^*B$ for each $q \in (0,1]$. Critical points $p$ of $H_q$ can be identified with pairs $(x,y)$ where $x \in w^{-1}(0)$ is a critical point of $f|_{w^{-1}(0)}$ and $y$ is a critical point of $g_x$ on the fibre $Z_x$. The index of a critical point $p=(x,y) \in \mbox{Crit}(H_q)$ is
\begin{equation} \label{idx}
\mbox{index}_{H_q}(p) = \mbox{index}_{f|_{w^{-1}(0)}}(x) + \mbox{index}_{g_x}(y) + k .
\end{equation}

By Proposition \ref{grad}, for fixed $q$ we have that $JX^q_{H_q}$ is the negative gradient of $H_q$ defined with respect to the metric $G_q$. Using the flow of 
\begin{equation} \label{flow}
\frac{du}{dt} = JX^q_{H_q}(u) ,
\end{equation}
for each $p \in \mbox{Crit}(H_q)$ we can define the stable and unstable manifolds $W^s_q(p)$ and $W^u_q(p)$, respectively. For $q \in (0,1]$ we assume the pair $(H_q, G_q)$ satisfy the Morse-Smale condition so that for all $p^{\pm} \in \mbox{Crit}(H_q)$ the family of moduli spaces $\mathcal{M}_q(p^-,p^+) = W^u_q(p^-) \cap W^s_q(p^+) / \mathbb{R}$ is a family of smooth manifolds all of dimension $\mbox{dim}(\mathcal{M}_q(p^-,p^+)) = \mbox{index}_{H_q}(p^-) - \mbox{index}_{H_q}(p^+) -1$. Thus, we can define a family of Morse-Smale-Witten complexes, $\mathcal{C}_*(H_q,J_q)$, by counting flow lines of (\ref{flow}) that join critical points of $H_q$.  The notation $\mathcal{C}_*(H_q,J_q)$ indicates the choice of Hamiltonian and almost complex structure. One might hope that, since the generators are identical, it might be possible to relate the differentials of $\mathcal{C}_*(H_q,J_q)$ with a Morse complex on the total space of $Z \to w^{-1}(0)$. The problem is that flow lines of $JX^q_{H_q}$ may be very different to the gradient flow lines of $-(f|_{w^{-1}(0)} +g)$ that are required to construct such a Morse complex. In particular, it is certainly not true that flow lines of $JX^q_{H_q}$ must be constrained to the submanifold $Z \subset T^*B$. However, as $q$ goes to zero the only flow lines of $JX^q_H$ that contribute to the differential are those that converge to gradient flow lines on $Z$ (to prove this rigorously following \cite{Sch14} we would need to appeal to a recent theorem by Eldering \cite{Eld13} on persistence of noncompact normally hyperbolic invariant manifolds). This implies that in the adiabatic limit $q \to 0$ elements of $W^u_q(p^-) \cap W^s_q(p^+)$ are in bijection with maps $u : \mathbb{R} \to Z$ satisfying
\begin{equation} \label{gradiented}
\frac{du}{dt} = - \nabla (f|_{w^{-1}(0)} + g) (u) , \ \ \ \lim_{t \to \pm \infty} u(t) = p^{\pm}  ,
\end{equation}
where $p^{\pm}$ are the critical points corresponding bijectively to $(x^{\pm},y^{\pm})$. Thus, for $q$ sufficiently small, we obtain an isomorphism of moduli spaces that means we can identify $\mathcal{C}_*(H_q,J_q)$ with a Morse complex on $Z$, whose homology is isomorphic to the singular homology of $w^{-1}(0)$ with grading shifted down by $k$.

\section*{Acknowledgements}
S. Mescher, I. Smith and J. Smith provided many useful comments and insightful suggestions. DST is supported by a Research Fellowship from Peterhouse, Cambridge.

%% The Appendices part is started with the command \appendix;
%% appendix sections are then done as normal sections
%% \appendix

%% \section{}
%% \label{}

%% If you have bibdatabase file and want bibtex to generate the
%% bibitems, please use
%%
%%  \bibliographystyle{elsarticle-num} 
%%  \bibliography{<your bibdatabase>}

%% else use the following coding to input the bibitems directly in the
%% TeX file.

\end{document}